\documentclass{amsart}
\usepackage{tikz}
\usepackage{setspace}
\setstretch{1.2}
\newtheorem{theorem}{Theorem}[section]
\newtheorem{lemma}[theorem]{Lemma}
\theoremstyle{definition}
\newtheorem{definition}[theorem]{Definition}
\newtheorem*{conj}{Conjecture}
\theoremstyle{remark}
\newtheorem{remark}[theorem]{Remark}
\numberwithin{equation}{section}
\newcommand{\Z}{\mathbb{Z}}
\newcommand{\Zn}{(\Z /2\Z)^n}
\newcommand{\Q}{\mathbb{Q}}
\newcommand{\R}{\mathbb{R}}

\newcommand{\OLU}{{\mathcal{O}_L^*}}
\newcommand{\abs}[1]{\lvert#1\rvert}
\newcommand{\ABS}[1]{\left|#1\right|}
\newcommand{\inn}[1]{\langle#1\rangle}
\newcommand{\LOG}{\operatorname{LOG}}
\newcommand{\Gal}{\operatorname{Gal}}
\newcommand{\norm}[2]{\lVert#1\rVert_{#2}}

\providecommand{\MR}{\relax\ifhmode\unskip\space\fi MR }

\allowdisplaybreaks

\begin{document}

\title[A real multi-quadratic Galois case of the B--RV conjecture]
{Bertrand's and Rodriguez Villegas' conjecture for real multi-quadratic Galois extensions of the rationals}

\author{DOHYEONG KIM}
\address{Department of Mathematical Sciences and Institute for Data Innovation in Science, Seoul National University, Gwanak-ro 1, Gwankak-gu, Seoul, South Korea 08826}
\email{dohyeongkim@snu.ac.kr}

\author{SEUNGHO SONG}
\address{Department of Mathematical Sciences, Seoul National University, Gwanak-ro 1, Gwankak-gu, Seoul, South Korea 08826}
\email{shsong0611@snu.ac.kr}

\date{\today}
\keywords{Bertrand--Rodriguez Villegas conjecture, units}

\begin{abstract}
     The conjecture due to Bertrand and Rodriguez Villegas asserts that the 1-norm of the nonzero element in an exterior power of the units of a number field has a certain lower bound. 
     We prove this conjecture for the exterior square case when the number field is a real multi-quadratic Galois extension of any degree of the rationals.
\end{abstract}

\maketitle


\section{Introduction}

An outstanding conjecture due to Lehmer \cite{Le1933} asserts a lower bound for the height of a unit in a number field.
If one considers the regulator of a number field, a lower bound has been established by Zimmert \cite{Zi1981}.
For exterior powers of units, Bertrand \cite{Be1997} proposed a conjecture, which was confirmed except for the exterior square case, by Amoroso and David \cite{AD1999}.
In 2002, Rodriguez Villegas proposed a sharper version of Bertrand's conjecture, 
also extending Lehmer's conjecture and Zimmert's result,
whose published form appeared relatively recently in \cite{CFS2022}.
We first state the definitions of the necessary terms.
The following definitions are from \cite{CFS2022}.

\begin{definition}
    Let~$L$ be a number field and~$\mathcal{A}_L$ be the set of its Archimedean places. 
    Then the function~$\LOG:\OLU \rightarrow \R^{\mathcal{A}_L}$ is defined by
    \begin{equation*}
        (\LOG(\gamma))_v:=e_v\log\abs{\gamma}_v 
        \text{\quad where \quad}  e_v:=\begin{cases}
        1\ &\text{if}\ v\ \text{is real},\\
        2\ &\text{if}\ v\ \text{is complex}\end{cases}
    \end{equation*}
    for~$\gamma \in \OLU$ and~$v \in \mathcal{A}_L$. 
    Here,~$\abs{\, \cdot \, }_v$ is the absolute value associated to~$v$ extending the absolute value on~$\Q$.
\end{definition}

Consider the orthonormal basis ~$\{\delta^v\}_{v \in \mathcal{A}_L}$ of~$\R^{\mathcal{A}_L}$ given by 
\begin{equation*}
    \delta_{w}^{v}:=\begin{cases}
    1\ &\text{if}\ \ w=v,\\
    0\ &\text{if}\ \ w\neq v \end{cases}
\end{equation*}
for~$w\in \mathcal{A}_L$.
Let~$\mathcal{A}_L^{[j]}$ be the set of subsets of~$\mathcal{A}_L$ with cardinality~$j$.
For each ~$I\in\mathcal{A}_L^{[j]}$, fix an ordering ~$\{v_1,v_2,...,v_j\}$ of elements of~$I$.
Define
\begin{equation*}
    \delta^{I}:=\delta^{v_1}\wedge \delta^{v_2} 
    \wedge \cdots \wedge \delta^{v_j}
\end{equation*}
to get an orthonormal basis ~$\{\delta^{I}\}_{I\in\mathcal{A}_L^{[j]}}$ of ~$\bigwedge^j\R^{\mathcal{A}_L}$.
For~$w=\sum\limits_{I \in \mathcal{A}_L^{[j]}} c_I\delta^I \in \bigwedge^j\R^{\mathcal{A}_L}$, define its 1-norm as
\begin{equation*}
    \norm{w}{1}:=\sum\limits_{I \in \mathcal{A}_L^{[j]}}\abs{c_I}.
\end{equation*}

Now we state the Bertrand's and Rodriguez Villegas' conjecture from \cite{CFS2022}.
\begin{conj}[Bertrand--Rodriguez Villegas]
    There exist two absolute constants \\~$c_0>0$ and ~$c_1>1$ such that for any number field~$L$ and any~$j \in \Z_{>0}$, the following inequality 
    \begin{equation} \label{brvin}
        \norm{w}{1} \geq c_0c_1^j
    \end{equation}
    holds for any nonzero~$w \in \bigwedge^j\LOG(\OLU) \subset \bigwedge^j\R^{\mathcal{A}_L}$.
\end{conj}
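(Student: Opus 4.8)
The plan is to pass from the combinatorial $1$-norm to a geometry-of-numbers problem for the unit lattice and then to model the required exponential growth on Zimmert's regulator bound, which is precisely the top exterior power. Write $\Lambda := \LOG(\OLU) \subset \R^{\mathcal{A}_L}$; since units have norm $\pm 1$ one checks $\sum_{v}(\LOG(\gamma))_v = 0$, so $\Lambda$ is a lattice of full rank $r = \abs{\mathcal{A}_L} - 1$ in the trace-zero hyperplane. For every $w \in \bigwedge^j \R^{\mathcal{A}_L}$ the elementary inequality $\norm{w}{1} \geq \norm{w}{2}$ holds, so it suffices to bound from below, by $c_0 c_1^j$, the Euclidean length of the shortest nonzero vector of the exterior power lattice $\bigwedge^j \Lambda$. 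For a pure wedge, $\norm{\LOG(u_1) \wedge \cdots \wedge \LOG(u_j)}{2}$ is the volume of the parallelepiped spanned by the $\LOG(u_i)$, i.e. a Gram determinant; in general the first minimum of $\bigwedge^j \Lambda$ is comparable, up to factors depending only on $r$ and $j$ (Minkowski's inequalities for exterior powers), to the product $\prod_{i=1}^{j}\lambda_i(\Lambda)$ of the $j$ smallest successive minima of $\Lambda$.

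This reduces the conjecture to the assertion that the geometric mean of the $j$ smallest successive minima of every unit lattice is bounded below by a fixed $c_1 > 1$ (absorbing $c_0$). The exponential shape is not accidental: up to Minkowski's second theorem the full product $\prod_{i=1}^{r}\lambda_i(\Lambda)$ is the regulator $R_L$, and Zimmert's theorem already supplies $R_L \geq c_0 c_1^{r}$; hence the top case $j = r$ of the conjecture, where $\bigwedge^r\Lambda$ is of rank one with generator of $2$-norm $R_L$, is exactly Zimmert. The plan is therefore to interpolate Zimmert to all $j$, combining three ingredients: Zimmert's bound to control the full product and forbid the minima from being collectively small; the exterior-power Minkowski inequalities to convert a length bound on $\bigwedge^j\Lambda$ into the partial product $\prod_{i=1}^j \lambda_i(\Lambda)$; and a geometry-of-numbers argument bounding how many of the smallest minima can be simultaneously small, since a large cluster of short independent units produces a sublattice of abnormally small covolume, in tension with the discriminant and regulator bounds available for $L$.

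The main obstacle is the bottom of the spectrum. The case $j = 1$ asks for $\norm{\LOG(u)}{1} \geq c_0 c_1$ for every non-torsion unit $u$, and since $\norm{\LOG(u)}{1} = 2\,[L:\Q]\,h(u)$ this is exactly Lehmer's conjecture, namely a field-independent positive lower bound on $\lambda_1(\Lambda)$. No such bound is known: the best unconditional inputs (Dobrowolski, and Amoroso--David for the higher powers) yield only estimates that decay with $[L:\Q]$ — enough for Bertrand's weaker, degree-dependent conjecture away from $j = 2$, but fatal for the absolute constant demanded by the B--RV conjecture. For $j = 2$ even the coarser Bertrand bound is unproven, so the interpolation scheme has no unconditional base case. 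Consequently the full conjecture lies beyond this route, and the feasible strategy is to restrict to families where the unit lattice is rigid enough to compute the smallest minima directly: for a real multi-quadratic Galois extension the group $\Zn$ acts on $\Lambda$ and the units are governed by those of the quadratic subfields, and this explicit structure lets one establish the required lower bound on $\prod_{i=1}^{2}\lambda_i(\Lambda)$ by hand, bypassing Lehmer and settling the exterior-square case that the general argument cannot reach.
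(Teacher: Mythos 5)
You have not produced a proof of this statement, and rightly so: the statement is the Bertrand--Rodriguez Villegas conjecture itself, which the paper does not prove and does not claim to prove --- it remains open, and the paper establishes only the exterior-square case for real multi-quadratic Galois extensions (Theorem \ref{multi}). There is therefore no ``paper's own proof'' to compare against, and your submission is best read as an obstruction analysis. As such it is essentially accurate and consistent with the paper's introduction: the case $j=1$ is equivalent to Lehmer's conjecture (your identity $\norm{\LOG(u)}{1}=2[L:\Q]h(u)$ for a unit $u$ is correct, since the coordinates of $\LOG(u)$ sum to zero), the case $j=\operatorname{rank}_{\Z}(\OLU)$ is Zimmert's theorem, Amoroso--David handles Bertrand's weaker conjecture for all $j$ except $j=2$, and no field-independent lower bound on $\lambda_1(\LOG(\OLU))$ is known. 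Your conclusion that the unconditional content must come from restricted families with rigid unit lattices is exactly the posture the paper takes.

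Two caveats on the scheme you sketch, one of which matters for your final pivot. First, the comparison $\lambda_1\bigl(\bigwedge^j\Lambda\bigr)\asymp\prod_{i=1}^{j}\lambda_i(\Lambda)$ via Mahler's compound-body theory carries constants depending on the rank $r$, which decays with $[L:\Q]$; so even granting Lehmer at the base, the interpolation would not obviously yield \emph{absolute} constants $c_0,c_1$ --- this degree-dependence is precisely why the paper emphasizes that Costa--Friedman's bound \eqref{cfineq} covers only pure wedge products and why the non-pure case is the crux of the conjecture. Second, and for the same reason, your proposed endgame for the multi-quadratic family --- establishing a lower bound on $\lambda_1(\Lambda)\lambda_2(\Lambda)$ ``by hand'' --- would control only pure wedges $\LOG(\epsilon_1)\wedge\LOG(\epsilon_2)$, whereas for $n\geq3$ the elements of $\bigwedge^2\LOG(\OLU)$ need not be decomposable, and passing from the two smallest minima to arbitrary $w$ reintroduces rank-dependent losses. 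The paper's mechanism is genuinely different: it reduces to the subgroup $E$ generated by the fundamental units of the $2^n-1$ quadratic subfields (Lemma \ref{fini}: $u^{2^{n-1}}\in E$ for every unit $u$), writes an arbitrary $w\in\bigwedge^2\LOG(E)$ as an integer combination $\sum n_{b,c}\,\LOG(u_b)\wedge\LOG(u_c)$, and uses a character-orthogonality (Hadamard-matrix) inequality, Lemma \ref{lin}, applied twice to extract a single nonzero coefficient $n_{b,c}\log(u_b)\log(u_c)$ from the 1-norm, with the factor $2^{2n-1}$ exactly compensating the index loss $2^{2n-2}$ from Lemma \ref{fini}. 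If your closing paragraph were developed into a proof of Theorem \ref{multi} as written, the treatment of non-pure $w$ would be a genuine gap; as an assessment of the conjecture itself, your analysis is sound.
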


For some values of~$j$, the conjecture reduces to the case when~$w$ is a pure wedge product, 
i.e.~$w=\LOG(\epsilon_1)\wedge \cdots \wedge \LOG(\epsilon_j)$ where~$\epsilon_i$ are multiplicatively independent units of~$L$.
This includes the case~$j=1$, where the conjecture is equivalent to Lehmer's conjecture \cite{Le1933}, 
and the case~$j=\operatorname{rank}_{\Z}(\OLU)$, where the conjecture is equivalent to Zimmert's theorem on regulators \cite{Zi1981}.
The case~$j=\operatorname{rank}_{\Z}(\OLU)-1$ also reduces to the case where~$w$ is a pure wedge product.
This is because every element of~$\bigwedge^j\LOG(\OLU)$ can be written in the form~$d \cdot \LOG(\epsilon_1)\wedge \cdots \wedge \LOG(\epsilon_j)$ where~$d \in \Z$ and~$\LOG(\epsilon_1),...,\LOG(\epsilon_{j+1})$ are the basis of~$\LOG(\OLU)$
as shown in the Lemma 28 of \cite{CFS2022}. 
In general, showing the inequality \eqref{brvin} for nonzero pure wedge products does not guarantee that the inequality holds for all nonzero elements of~$\bigwedge^j\LOG(\OLU)$.
However, previous works on the Bertrand--Rodriguez Villegas conjecture only involves pure wedge products as stated in \cite{CFS2022}. 

For a totally real number field~$L$, Costa and Friedman \cite{CF1991} showed that for independent elements~$\epsilon_1,...,\epsilon_j$ of~$\OLU$, the inequality
\begin{equation} \label{cfineq}
    \norm{\LOG(\epsilon_1)\wedge \cdots \wedge \LOG(\epsilon_j)}{2}
    >\frac{1}{(j+2)\sqrt{j}}\left(\frac{[L:\Q]}{j}\right)^{j/2} 1.406^j
\end{equation}
holds for~$1\leq j<[L:\Q]$. Therefore, with some calculations it follows that 
\begin{equation*}
    \norm{\LOG(\epsilon_1)\wedge \cdots \wedge \LOG(\epsilon_j)}{1}
    >\frac{1}{(j+2)\sqrt{j}}\left(\frac{[L:\Q]}{j}\right)^{j/2} 1.406^j\geq 0.001 \cdot 1.4^j
\end{equation*}
also holds, proving the conjecture for pure wedge products of units of totally real fields \cite{CFS2022}. 
The inequality \eqref{cfineq} is stronger than that of Bertrand-Rodriguez Villegas conjecture, in the sense that the 2-norm is used and that the lower bound diverges as the degree~$[L:\Q]$ goes to infinity for a fixed~$j$. 
This implies that the important part of the conjecture is that the inequality \eqref{brvin} holds for all nonzero~$w\in\bigwedge^j\LOG(\OLU)$, even for~$w$ that is not a pure wedge product. 

In this paper, we give a lower bound of~$\norm{w}{1}$ for nonzero~$w \in \bigwedge^2\LOG(\OLU)$ when ~$L$ is a real Galois extension of~$\Q$ with~$\Gal(L/\Q) \cong \Zn$ for some integer~$n \geq 2$.
When~$n\geq3$, this case does not necessarily reduce to the case when~$w$ is a pure wedge product.
Our main theorem is as follows. 
\begin{theorem} \label{multi}
    Let~$n \geq 2$ be a positive integer, and~$L$ be a real Galois extension of~$\Q$ with~$\Gal(L/\Q) \cong \Zn$.
    Then, 
    \begin{equation*}
        \norm{w}{1} \geq 2 \log\left(\frac{1+\sqrt{5}}{2}\right)\log(1+\sqrt{2}) \approx 0.8483
    \end{equation*}
    for any nonzero~$w \in \bigwedge^2 \LOG (\OLU).$
\end{theorem}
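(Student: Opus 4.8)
The plan is to exploit the $\Zn$-action on $\R^{\mathcal{A}_L}$ to construct, for each pair of quadratic subfields of $L$, an explicit element of the dual lattice $\bigl(\bigwedge^2\LOG(\OLU)\bigr)^{\vee}$ whose coordinates are controlled by the two regulators, and then to combine the integrality of the natural pairing with $\ell^1$–$\ell^\infty$ duality.

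First I would set up the representation theory. Since $L$ is totally real and Galois, $G:=\Gal(L/\Q)\cong\Zn$ acts simply transitively on $\mathcal{A}_L$, so after fixing a base place $v_0$ and writing $v_g:=gv_0$ we identify $\R^{\mathcal{A}_L}$ with the regular representation $\R[G]$. Each character $\chi\colon G\to\{\pm1\}$ yields a vector $e_\chi:=\sum_{g\in G}\chi(g)\,\delta^{v_g}$, and $\{e_\chi : \chi\neq 1\}$ is an orthogonal basis of the trace-zero hyperplane $V:=\LOG(\OLU)\otimes\R$, with $\inn{e_\chi,e_{\chi'}}=2^n$ when $\chi=\chi'$ and $0$ otherwise. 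For nontrivial $\chi$ the fixed field $K_\chi:=L^{\ker\chi}$ is a real quadratic field; writing $\epsilon_\chi$ for its fundamental unit and $R_\chi$ for its regulator, one checks directly that $\LOG(\epsilon_\chi)=R_\chi e_\chi$. Consequently the wedges $e_\chi\wedge e_{\chi'}$ (over distinct nontrivial $\chi,\chi'$) form an orthogonal basis of $\bigwedge^2 V$ whose $\delta^I$-coordinates all lie in $\{0,\pm 2\}$, so every nonzero $w$ pairs nontrivially with at least one of them.

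The crux, and what I expect to be the main obstacle, is the integrality statement: for every nontrivial $\chi$ the covector $\tfrac{1}{2R_\chi}e_\chi$ lies in $\LOG(\OLU)^{\vee}$. To prove it I would compute, for $\gamma\in\OLU$, that $\inn{e_\chi,\LOG(\gamma)}=\log\abs{N_\chi(\gamma)}$ where $N_\chi(\gamma):=\prod_{g\in G}(g\gamma)^{\chi(g)}$. Splitting the product according to the sign of $\chi(g)$ identifies $N_\chi(\gamma)=N_{L/K_\chi}(\gamma)\big/\overline{N_{L/K_\chi}(\gamma)}$; since $N_{L/K_\chi}(\gamma)=\pm\epsilon_\chi^{a}$ for some $a\in\Z$ and $\overline{\epsilon_\chi}=\pm\epsilon_\chi^{-1}$, this quotient equals $\pm\epsilon_\chi^{2a}$, an \emph{even} power of the fundamental unit. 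Hence $\inn{e_\chi,\LOG(\gamma)}=\pm 2aR_\chi\in 2R_\chi\Z$, which is exactly the asserted integrality. It is this evenness that produces the factor $2$ in the final bound, and carrying out the sign bookkeeping and the norm computation so that they are valid for \emph{all} units, not merely those coming from subfields, is where the genuine number-theoretic content sits.

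Granting this, the conclusion is formal. Given nonzero $w$, choose distinct nontrivial $\chi,\chi'$ with $\inn{w,e_\chi\wedge e_{\chi'}}\neq 0$ and set $\omega:=\tfrac{1}{2R_\chi}e_\chi\wedge\tfrac{1}{2R_{\chi'}}e_{\chi'}\in\bigl(\bigwedge^2\LOG(\OLU)\bigr)^{\vee}$. Then $\inn{w,\omega}=\tfrac{1}{4R_\chi R_{\chi'}}\inn{w,e_\chi\wedge e_{\chi'}}$ is a nonzero integer, while $\norm{\omega}{\infty}=\tfrac{1}{2R_\chi R_{\chi'}}$ because the coordinates of $e_\chi\wedge e_{\chi'}$ lie in $\{0,\pm 2\}$. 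The duality inequality $\abs{\inn{w,\omega}}\le\norm{\omega}{\infty}\,\norm{w}{1}$ then gives $\norm{w}{1}\ge 2R_\chi R_{\chi'}$. Finally $K_\chi$ and $K_{\chi'}$ are distinct real quadratic fields, so at most one of them is $\Q(\sqrt 5)$; since the two smallest regulators of real quadratic fields are $\log\tfrac{1+\sqrt5}{2}$ (attained by $\Q(\sqrt5)$) and $\log(1+\sqrt2)$ (attained by $\Q(\sqrt2)$), we obtain $R_\chi R_{\chi'}\ge\log\tfrac{1+\sqrt5}{2}\cdot\log(1+\sqrt2)$, which yields the stated bound.
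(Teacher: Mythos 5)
Your proposal is correct and reaches the paper's exact constant, but by a genuinely different route. The paper works inside the sublattice $E$ generated by the fundamental units of the quadratic subfields: it shows $u^{2^{n-1}}\in E$ for every unit $u$ (Lemma \ref{fini}), proves the bound $\norm{w}{1}\geq 2^{2n-1}\log(\frac{1+\sqrt{5}}{2})\log(1+\sqrt{2})$ for nonzero $w\in\bigwedge^2\LOG(E)$ by writing out all coordinates and applying a Hadamard-matrix inequality (Lemma \ref{lin}) twice, and then scales down, using $2^{2n-2}w\in\bigwedge^2\LOG(E)$, which costs the factor $2^{2n-2}$ and leaves exactly $2\log(\frac{1+\sqrt{5}}{2})\log(1+\sqrt{2})$. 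You dualize instead: your evenness statement $N_{L/K_\chi}(\gamma)\big/\overline{N_{L/K_\chi}(\gamma)}=\pm\epsilon_\chi^{2a}$, equivalently that $\frac{1}{2R_\chi}\inn{e_\chi,\cdot}$ is $\Z$-valued on all of $\LOG(\OLU)$, carries the same arithmetic content as Lemma \ref{fini} (that lemma yields $2^{n-1}\inn{e_\chi,\LOG(u)}\in 2^{n}R_\chi\Z$, which is the same conclusion), but packaging it as an integral covector lets you replace the explicit coordinate computations, both applications of Lemma \ref{lin}, and the lattice-scaling step by a single H\"older ($\ell^1$--$\ell^\infty$) estimate against one dual element. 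What your route buys: it is shorter, it makes the origin of the final factor $2$ transparent, and it gives the refined bound $\norm{w}{1}\geq 2R_\chi R_{\chi'}$ for every pair of characters at which $w$ has a nonzero coefficient, which beats the uniform constant whenever that pair avoids $\Q(\sqrt{5})$ and $\Q(\sqrt{2})$. What the paper's route buys: the intermediate inequality \eqref{proto}, a lower bound growing like $2^{2n-1}$ on the sublattice $\bigwedge^2\LOG(E)$, which your argument does not produce. Two small debts in your write-up, both easily settled: the fact that distinct nontrivial characters have distinct kernels (so that $K_\chi\neq K_{\chi'}$ and at most one of them is $\Q(\sqrt{5})$) is the paper's Lemma \ref{subg}(1), and the regulator minima you quote are exactly the paper's Lemma \ref{quadunit}; each needs a line of proof or a citation.
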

While our result is limited to the exterior square case where~$L$ belongs to a specific family of number fields,
we provide a constant lower bound of~$\norm{w}{1}$ independent of the degree~$[L:\Q]=2^n$.

The key idea of the proof of the main theorem is to use the fundamental units of the quadratic subfields of~$L$.
The field $L$ has exactly~$2^n-1$ quadratic subfields, and the fundamental unit of these quadratic subfields generate a subgroup~$E$ of~$\OLU$. 
Since the Galois module structure of $E$ is known,~$\LOG(E)$ is easier to handle than ~$\LOG(\OLU)$.
We first give a lower bound of ~$\norm{w}{1}$ for nonzero element ~$w$ of~$\bigwedge^2\LOG(E)$ using elementary linear algebra, and extend this result to ~$\bigwedge^2\LOG(\OLU)$.
The term ~$\log(\frac{1+\sqrt{5}}{2})$ and ~$\log(1+\sqrt{2})$ from the lower bound comes from the fact that the fundamental unit~$u>1$ of a quadratic field~$\Q(\sqrt{m})$ satisfy~$u\geq\frac{1+\sqrt{5}}{2}$, and when~$m\neq5$,~$u\geq1+\sqrt{2}$.

The proof of the main theorem is given in the following sections.
In section ~\ref{secunits} we show some properties of subgroups of~$\Zn$ of index $2$. 
We introduce a subgroup~$E$ of~$\OLU$ generated by positive units of quadratic subfields of~$L$,
and prove that for any unit~$u\in\OLU$,~$u^{2^{n-1}}\in E$.
In section ~\ref{secnorm}, 
using the values of~$\LOG$ of the fundamental units, 
we give a basis of~$\bigwedge^2\LOG(E)$ and the 1-norm of its elements.
In section ~\ref{seclower} we give a lower bound for the elements of~$\bigwedge^2\LOG(E)$ using elementary linear algebra, and use this result to prove our main theorem.

\subsection*{Acknowledgement}
The work of S.S. was supported by College of Natural Sciences Undergraduate Internship Program from Seoul National University.
D.K. is supported by the National Research Foundation of Korea (NRF)\footnote{the grants No.\,RS-2023-00301976 and No.\,2020R1C1C1A0100681913}.


\section{The Structure of Units} \label{secunits}

Fix a positive integer~$n\geq2$ and consider a totally real number field~$L$ such that the group ~$\Gal(L/\Q)$ is isomorphic to~$\Zn$. 
We will identify~$\Gal(L/\Q)$ and~$\Zn$ via a fixed isomorphism~$\lambda\colon \Zn\xrightarrow{\sim}\Gal(L/\Q)$.
We also fix an embedding of~$L$ into~$\R$,
so the elements of~$\Gal(L/\Q)$ are in one-to-one correspondence with the places of~$L$.
Based on the choices we made, we identify~$\mathcal{A}_L$ with~$\Gal(L/\Q)$ throughout.
Let~$A:=\Zn \backslash \{0\}$ and let~$B$ be the set of subgroups of~$\Zn$ of index~$2$. 
Then each element of~$B$ corresponds to a quadratic subfield of~$L$. 
We first define some notions that will be used throughout this paper.

\begin{definition}
    We define~$\inn{\, ,\, }:\Zn \times \Zn \rightarrow \Z/2\Z$ as follows. 
    For two elements~$p=(p_1,p_2,...,p_n)$ and~$q=(q_1,...,q_n)$ of~$\Zn$ with~$p_i,q_i \in \Z/2\Z~$ for~$1\leq i \leq n$, define
    \begin{equation*}
        \inn{p,q}:=\sum\limits_{i=1}^{n}p_iq_i,
    \end{equation*}
    where the addition and multiplication are taking place in the field~$\Z/2\Z$.
    For~$p\in A$, define
    \begin{equation*}
        p^\perp:=\{z \in \Zn \ | \ \inn{p,z}=0 \}.
    \end{equation*}    
\end{definition}

For any~$p\in A$,~$p^\perp$ is the kernel of the surjective group homomorphism that sends~$z\in \Zn$ to~$\inn{p,z}$. 
Thus~$p^\perp \in B$. In fact, every element of~$B$ are of this form, as stated in the following lemma.

\begin{lemma} \label{subg}
    Let~$A,B$ be defined as above.
    \begin{enumerate}
        \item There is a bijective map~$\phi$ between~$A$ and~$B$ given by
        \begin{align*}
            \phi: \ A \  &\rightarrow \ B \\
            x \ &\mapsto \ x^\perp            
        \end{align*}
        and, in particular,~$\abs{B}=2^n-1$. 
        \item For any element~$x$ of~$A$,~$x$ is in exactly~$2^{n-1}-1$ elements of~$B$.
        \item For any two distinct elements~$x$ and~$y$ of~$A$,~$\abs{x^\perp \cap y^\perp}=2^{n-2}$.
    \end{enumerate}
\end{lemma}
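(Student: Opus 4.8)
The plan is to treat~$\inn{\,,\,}$ as a non-degenerate symmetric bilinear form on the~$\Z/2\Z$-vector space~$\Zn$, so that each~$x^\perp$ becomes the kernel of the linear functional~$f_x\colon z\mapsto\inn{x,z}$. Non-degeneracy is immediate: evaluating~$f_x$ on the standard basis vectors recovers the coordinates of~$x$, so~$f_x=0$ forces~$x=0$. Hence~$x\mapsto f_x$ is an isomorphism from~$\Zn$ onto its dual space, and this identification is the engine behind all three parts.

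For part~(1), I would first record that~$f_x$ is a nonzero functional for every~$x\in A$, hence surjective onto~$\Z/2\Z$, so~$x^\perp=\ker f_x$ has index~$2$ and~$\phi$ is well-defined. Injectivity uses the characteristic-two subtlety: if~$x^\perp=y^\perp$, then~$f_x$ and~$f_y$ are nonzero functionals with the same kernel, so they differ by a nonzero scalar; but the only nonzero scalar in~$\Z/2\Z$ is~$1$, forcing~$f_x=f_y$ and thus~$x=y$. For surjectivity, any~$H\in B$ is the kernel of the quotient map~$\Zn\to\Zn/H\cong\Z/2\Z$, a nonzero functional, which equals~$f_x$ for a unique~$x$ by the duality above; this~$x$ lies in~$A$ and satisfies~$\phi(x)=H$. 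The count~$\abs{B}=\abs{A}=2^n-1$ then follows.

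Parts~(2) and~(3) are counting arguments that I would phrase through the bijection~$\phi$ and the symmetry of the form. For part~(2), an element~$x\in A$ lies in~$H=y^\perp$ exactly when~$\inn{x,y}=0$, i.e.\ when~$y\in x^\perp$; excluding~$y=0$ leaves~$\abs{x^\perp}-1=2^{n-1}-1$ admissible~$y$, each giving a distinct~$H$ by injectivity of~$\phi$. For part~(3), the key is that for distinct~$x,y\in A$ the functionals~$f_x,f_y$ are linearly independent over~$\Z/2\Z$: the only possible nontrivial relation is~$f_x+f_y=f_{x+y}=0$, which would force~$x=y$. Consequently the map~$z\mapsto(\inn{x,z},\inn{y,z})$ is surjective onto~$(\Z/2\Z)^2$, so its kernel~$x^\perp\cap y^\perp$ has index~$4$ and cardinality~$2^{n-2}$.

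The steps are all elementary, and I do not expect a serious obstacle; the one place that demands care is the repeated use of the characteristic-two fact that two nonzero functionals sharing a kernel must be equal (equivalently, that any linear dependence between~$f_x$ and~$f_y$ collapses to~$x=y$). This is precisely what forces~$\phi$ to be injective in part~(1) and what guarantees the rank-$2$ image in part~(3).
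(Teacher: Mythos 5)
Your proof is correct, but it runs along a more structural route than the paper's. You package everything into the duality statement that $x\mapsto f_x$ (where $f_x(z)=\inn{x,z}$) is an isomorphism from $\Zn$ onto its dual space, and then derive: well-definedness of $\phi$ from $f_x\neq 0$ having index-$2$ kernel; injectivity from the characteristic-two fact that two nonzero functionals with the same kernel must coincide; surjectivity by realizing any $H\in B$ as the kernel of the quotient functional $\Zn\to\Zn/H\cong\Z/2\Z$ and pulling it back through the duality; part (2) from symmetry of the form plus $\abs{x^\perp}=2^{n-1}$; and part (3) from linear independence of $f_x,f_y$, which makes $z\mapsto(\inn{x,z},\inn{y,z})$ surjective onto $(\Z/2\Z)^2$ with kernel of index $4$. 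The paper instead argues everything with explicit coordinates and coset pairings: injectivity of $\phi$ is shown by exhibiting a standard basis vector $e_j$ lying in exactly one of $x^\perp,y^\perp$; surjectivity by explicitly constructing $a=\sum_i h_b(e_i)e_i$ from the quotient homomorphism $h_b$; and parts (2) and (3) by the pairing $z\leftrightarrow z+e_j$, which splits the relevant sets exactly in half. The two proofs are the same mathematics in different clothing: your version is shorter and reuses part (1) to do the counting in parts (2) and (3) (via subgroup/kernel indices), while the paper's version is entirely self-contained and elementary, never invoking the dual space or facts about functionals. One small point to make explicit if you write yours up: in part (3), besides the relation $f_x+f_y=0$, you should also note that $f_x=0$ and $f_y=0$ are impossible because $x,y\neq 0$ and the form is non-degenerate; and the step from linear independence to surjectivity of $z\mapsto(f_x(z),f_y(z))$ deserves a sentence (a proper subspace of $(\Z/2\Z)^2$ is annihilated by a nonzero functional, which would yield a dependence).
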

\begin{proof}
    (1) First we show~$\phi$ is surjective. 
    Take any~$b \in B$. 
    Since~$\Zn$ is abelian, ~$b$ is a normal subgroup of~$\Zn$, and~$\Zn/b \cong \Z/2\Z$. 
    Thus there exists a surjective homomorphism~$h_b: \Zn \rightarrow \Z/2\Z$ such that its kernel is~$b$. 
    Let~$e_i$ be a~$i$-th standard basis of~$\Zn$, i.e. the element with~$i$-th component~$1$ and other components~$0$. 
    If we let~$a=\sum\limits_{i=1}^{n} h_b(e_i)e_i$, then~$a\in A$ and~$h_b$ sends~$z \in \Zn$ to~$\inn{a,z}$.
    Thus~$\phi(a)=\ker(h_b)=b$, and~$\phi$ is surjective. \\
    We now show~$\phi$ is injective. 
    Let~$x,y$ be distinct elements of~$A$. 
    Since~$x\neq y$, there exists~$1\leq j\leq n$ such that~$\inn{x,e_j}\neq \inn{y,e_j}$. 
    If~$e_j \in x^\perp$, then~$e_j \notin y^\perp$ and if~$e_j \notin x^\perp$, then~$e_j \in y^\perp$. Thus~$x^\perp \neq y^\perp$ and~$\phi$ is injective. Therefore~$\phi$ is bijective and~$\abs{B}=\abs{A}=2^n-1$.\\
    (2) By (1), we only need to count the number of elements of~$A$ which are perpendicular to~$x$. Let the~$j$-th component of~$x$ be 1. 
    Then for any~$z \in \Zn$, only one of~$z$ and~$z+e_j$ is perpendicular to~$x$. 
    Thus ~$\frac{1}{2}\abs{\Zn}=2^{n-1}$ elements of~$\Zn$ are perpendicular to~$x$. Excluding 0, exactly ~$2^{n-1}-1$ elements of~$A$ are perpendicular to~$x$. \\
    (3) Since~$x\neq y$, there exists~$1\leq j\leq n$ such that~$\inn{x,e_j}\neq \inn{y,e_j}$. 
    Without loss of generality, let~$\inn{x,e_j}=0$ and~$\inn{y,e_j}=1$. 
    Then for any~$z \in x^\perp$, since~$\inn{z+e_j,x}=0$, ~$z+e_j\in x^\perp$. Since~$\inn{z,y} \neq \inn{z+e_j,y}$, only one of~$z$ and~$z+e_j$ is also in~$y^\perp$. 
    Thus exactly~$\frac{1}{2}\abs{x^\perp}=2^{n-2}$ elements of~$x^\perp$ are in~$x^\perp \cap y^\perp$.
\end{proof}

For each~$a \in A$,~$a^\perp$ is a subgroup of~$\Zn$ of index~$2$, so~$\lambda(a^\perp)$ is a subgroup of~$\Gal(L/\Q)$ of index~$2$. 
Let~$\Q(\sqrt{d_a})$ be the quadratic subfield of~$L$ that corresponds to~$\lambda(a^\perp)$, 
and let~$u_a>1$ be its fundamental unit.
Then every unit of~$\Q(\sqrt{d_a})$ is of the form~$\pm u_a^m$ where~$m\in\Z$.
Let~$E:=\{ \prod\limits_{a \in A}u_a^{m_a}\  |\  m_a \in \Z\}$.

\begin{lemma} \label{fini}
    For any unit ~$u$ of ~$L$, ~$u^{2^{n-1}}$ is in ~$E$.
\end{lemma}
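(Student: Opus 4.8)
The plan is to descend $u$ to each quadratic subfield by taking norms, and then to reassemble these norms into a power of $u$. Throughout I use the identifications of $\Gal(L/\Q)$ with $\Zn$ via $\lambda$ and of $\mathcal{A}_L$ with $\Gal(L/\Q)$ fixed in this section. For each $a\in A$, set
\[
    P_a:=N_{L/\Q(\sqrt{d_a})}(u)=\prod_{\sigma\in\lambda(a^\perp)}\sigma(u).
\]
Since $L/\Q(\sqrt{d_a})$ is Galois with group $\lambda(a^\perp)$ and the norm of a unit is a unit, $P_a$ is a unit of the real quadratic field $\Q(\sqrt{d_a})$, so $P_a=\pm u_a^{k_a}$ for some $k_a\in\Z$. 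In particular each $P_a$ lies in $\pm E$, so it suffices to show that $\prod_{a\in A}P_a$ equals $\pm u^{2^{n-1}}$.

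The key computation is to expand this product over $\Gal(L/\Q)$. Collecting the contribution of each $\sigma$ yields
\[
    \prod_{a\in A}P_a=\prod_{\sigma\in\Gal(L/\Q)}\sigma(u)^{c_\sigma},
    \qquad c_\sigma:=\abs{\{a\in A:\sigma\in a^\perp\}}.
\]
The identity element contributes $c_{\mathrm{id}}=\abs{A}=2^n-1$, since $0\in a^\perp$ for every $a$; for $\sigma\neq\mathrm{id}$, identified with a nonzero element of $\Zn$, Lemma \ref{subg}(2) gives $c_\sigma=2^{n-1}-1$. Separating the identity term and using $\prod_{\sigma}\sigma(u)=N_{L/\Q}(u)=\pm1$, so that $\prod_{\sigma\neq\mathrm{id}}\sigma(u)=\pm u^{-1}$, the exponent of $u$ collapses to $(2^n-1)-(2^{n-1}-1)=2^{n-1}$, and I obtain $\prod_{a\in A}P_a=\pm u^{2^{n-1}}$.

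It remains to remove the sign. Writing $P_a=\epsilon_a u_a^{k_a}$ with $\epsilon_a\in\{\pm1\}$, the previous step gives $u^{2^{n-1}}=\pm\prod_{a\in A}u_a^{k_a}$. Under the fixed real embedding of $L$ every $u_a$ is positive and $2^{n-1}$ is even because $n\geq2$, so both $u^{2^{n-1}}$ and $\prod_a u_a^{k_a}$ are positive; hence the sign is $+$ and $u^{2^{n-1}}=\prod_{a\in A}u_a^{k_a}\in E$. The one point requiring care is the exponent bookkeeping: the whole argument hinges on obtaining the sharp power $2^{n-1}$ rather than a larger power of $2$, and this is precisely what the quadratic descent through $P_a$, combined with the count $c_\sigma=2^{n-1}-1$ supplied by Lemma \ref{subg}(2), is designed to deliver.
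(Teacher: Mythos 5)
Your proof is correct and follows essentially the same route as the paper's: descend via the norms $N_{L/\Q(\sqrt{d_a})}(u)$, count how often each $\sigma\in\Gal(L/\Q)$ occurs using Lemma \ref{subg}, and collapse the exponent using $\abs{N_{L/\Q}(u)}=1$. The only cosmetic difference is that the paper disposes of the ambiguous signs by taking absolute values at the outset, whereas you track them and remove them at the end using the positivity of the $u_a$ and the evenness of $2^{n-1}$; both steps are equivalent.
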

\begin{proof}
    For any~$a \in A$,~$N_{L/\Q(\sqrt{d_a})}(u)$, its norm in $\Q(\sqrt{d_a})$, is a product of conjugates of~$u$, so it is a unit of~$L$.
    Since~$N_{L/\Q(\sqrt{d_a})}(u)\in \Q(\sqrt{d_a})$, it is also a unit of~$\Q(\sqrt{d_a})$.
    Hence
    \begin{equation*}
        \prod_{\sigma \in \Gal(L/\Q(\sqrt{d_a}))}\sigma(u) = \pm u_a^{m_a}
    \end{equation*}
    for some~$m_a \in \Z$.
    By multiplying above equations for every ~$a$ and taking its absolute value, we get
    \begin{align*}
        \prod_{a \in A}u_a^{m_a} &= \ABS{\prod_{a \in A}\prod_{\sigma \in \Gal(L/\Q(\sqrt{d_a}))}\sigma(u)}\\
        &=\ABS{ \prod_{b \in B}\prod_{x \in b} \lambda(x)(u) }
    \end{align*}
    where the second equality comes from the bijection between~$A$ and~$B$ in (1) of Lemma ~\ref{subg}.
    In the product ~$\prod\limits_{b \in B}\prod\limits_{x \in b}$, the element ~$0 \in \Zn$ appears ~$\abs{B}=2^n-1$ times, 
    while other elements appear~$2^{n-1}-1$ times by (2) of Lemma \ref{subg}.
    Thus in the product~$\prod\limits_{a \in A}\prod\limits_{\sigma \in \Gal(L/\Q(\sqrt{d_a}))}$,
   ~$\sigma$ is the identity element of~$\Gal(L/\Q)$~$2^n-1$ times, while it is equal to any given non-trivial element~$2^{n-1}-1$ times. 
   Thus we have
    \begin{align*}
        \prod_{a \in A}u_a^{m_a} & = \ABS{u^{2^n-1} \cdot\prod_{\sigma \in \Gal(L/\Q)\backslash\{id\}}\sigma(u)^{2^{n-1}-1}} \\
        &=\ABS{u^{2^{n-1}} \cdot \prod_{\sigma \in \Gal(L/\Q)}\sigma(u)^{2^{n-1}-1} } \\
        &=\abs{u^{2^{n-1}}\cdot N_{L/\Q}(u)^{2^{n-1}-1}}.
    \end{align*}
    Since ~$u\in\OLU$, we have ~$\abs{N_{L/\Q}(u)}=1$ and therefore ~$u^{2^{n-1}}= \prod\limits_{a \in A}u_a^{m_a}\in E$.
\end{proof}


\section{The 1-norm of the Wedge Products} \label{secnorm}

Since ~$E$ is generated by $\{u_a\}_{a\in A}$,~$\LOG(E)$ is generated by~$\{\LOG(u_a)\}_{a\in A}$.
Identifying~$\Gal(L/\Q)$ with~$\mathcal{A}_L$, the basis of~$\R^{\mathcal{A}_L}$ is
$\{\delta^{\sigma}\}_{\sigma \in \Gal(L/\Q)} =\{\delta^{\lambda(x)}\}_{x \in \Zn}$. 
We now compute~$\LOG(u_a)$ for~$a \in A$. 
If~$\lambda(x) \in \Gal(L/\Q(\sqrt{d_a}))$, it fixes~$u_a$ and if~$\lambda(x) \notin \Gal(L/\Q(\sqrt{d_a}))$, it sends~$u_a$ to its conjugate,~$\pm \frac{1}{u_a}$. Also,~$\lambda(x) \in \Gal(L/\Q(\sqrt{d_a}))$ if and only if~$x \in a^\perp$, which is equivalent to~$\inn{a,x}=0$.
Thus, the coefficient of the basis~$\delta^{\lambda(x)}$ of~$\LOG(u_a)$ is
\begin{equation*}
    (\LOG(u_a))_{\lambda(x)}=(-1)^{\inn{a,x}}\log(u_a).
\end{equation*}
Here, for~$n \in \Z/2\Z$,~$(-1)^n=1$ if~$n=0$ and~$(-1)^n=-1$ if~$n=1$.
Now give any ordering to~$\Zn$ such that~$0$ is the smallest element, and also give this ordering to~$A$.
For~$b,c \in A$ and~$x,y \in \Zn$ with~$b<c$ and~$x<y$, the coefficient of the basis~$\delta^{\lambda(x)} \wedge \delta^{\lambda(y)}$ of~$\LOG(u_b) \wedge \LOG(u_c)$ is
\begin{equation*}
    (\LOG(u_b) \wedge \LOG(u_c))_{(\lambda(x),\lambda(y))}=\log(u_b)\log(u_c)\{(-1)^{\inn{b,x}+\inn{c,y}}-(-1)^{\inn{b,y}+\inn{c,x}}\}.
\end{equation*}

The abelian group ~$\bigwedge^2\LOG(E)$ is generated by~$\LOG(u_b) \wedge \LOG(u_c)$'s where~$b,c \in A$ and~$b<c$.
Thus ~$w \in \bigwedge^2\LOG(E)$ can be written as
\begin{equation} \label{wexp}
    w=\sum_{\substack{b,c \in A \\ b<c}}n_{b,c}\LOG(u_b) \wedge \LOG(u_c)
\end{equation}
where all~$n_{b,c}$'s are integers. 
Then its coefficient of the basis~$\delta^{\lambda(x)} \wedge \delta^{\lambda(y)} (x<y)$ is 
\begin{equation} \label{wcoeff}
    (w)_{(\lambda(x),\lambda(y))}=\sum_{\substack{b,c \in A \\ b<c}}n_{b,c}\log(u_b)\log(u_c)\{(-1)^{\inn{b,x}+\inn{c,y}}-(-1)^{\inn{b,y}+\inn{c,x}}\}.
\end{equation}
If we let~$b+c=d$ and~$x+y=z$, then 
\begin{align*}
    (-1)^{\inn{b,x}+\inn{c,y}}-(-1)^{\inn{b,y}+\inn{c,x}}
    &=(-1)^{\inn{b,y}+\inn{c,x}}\{(-1)^{\inn{b+c,x+y}}-1\} \\
    &=(-1)^{\inn{b,z-x}+\inn{d-b,x}}\{(-1)^{\inn{d,z}}-1\} \\
    &=(-1)^{\inn{b,z}+\inn{d,x}+2\inn{b,x}}\{(-1)^{\inn{d,z}}-1\} \\
    &=(-1)^{\inn{b,z}+\inn{d,x}}\{(-1)^{\inn{d,z}}-1\}.
\end{align*}
Then \eqref{wcoeff} and the above equation imply that the 1-norm of $w$ is 
\begin{align*} 
    &\sum_{\substack{x,y \in \Zn \\ x<y}} \ABS{\sum_{\substack{b,c \in A \\ b<c}}n_{b,c}\log(u_b)\log(u_c)\{(-1)^{\inn{b,x}+\inn{c,y}}-(-1)^{\inn{b,y}+\inn{c,x}}\}}  \\
    =&\sum\limits_{z \in A}\sum_{\substack{x \in \Zn \\ x<z+x}} \Bigg| \sum\limits_{d \in A}\sum_{\substack{b \in A \\ b<b+d}} n_{b,d+b}\log(u_b)\log(u_{d+b})
       \cdot(-1)^{\inn{b,z}+\inn{d,x}}\{(-1)^{\inn{d,z}}-1\}\Bigg|  \\
    =&\sum\limits_{z \in A}\sum_{\substack{x \in \Zn \\ x<z+x}} \Bigg| \sum_{\substack{d \in A \\ \inn{d,z}=1}}\sum_{\substack{b \in A \\ b<b+d}}n_{b,d+b}\log(u_b)\log(u_{d+b})(-1)^{\inn{b,z}+\inn{d,x}}(-2)\Bigg|  \\
    =&\sum\limits_{z \in A}\sum_{\substack{x \in \Zn \\ x<z+x}} \Bigg| \sum_{\substack{d \in A \\ \inn{d,z}=1}}\sum_{\substack{b \in A \\ b<b+d}}2n_{b,d+b}\log(u_b)\log(u_{d+b})(-1)^{\inn{b,z}+\inn{d,x}}\Bigg|. 
\end{align*}
In summary, 
\begin{equation} \label{wnorm}
    \norm{w}{1}=\sum\limits_{z \in A}\sum_{\substack{x \in \Zn \\ x<z+x}} \Bigg| \sum_{\substack{d \in A \\ \inn{d,z}=1}}\sum_{\substack{b \in A \\ b<b+d}}2n_{b,d+b}\log(u_b)\log(u_{d+b})(-1)^{\inn{b,z}+\inn{d,x}}\Bigg| 
\end{equation}
for ~$w$ given by ~\eqref{wexp}.

\section{The Lower Bound of the 1-norm} \label{seclower}

To give a lower bound for \eqref{wnorm}, we turn to the following lemma. 
\begin{lemma} \label{lin}
    Let~$m$ be a nonnegative integer, and let~$P$ be a~$2^m \times 2^m$ matrix with each entries~$1$ or~$-1$. Assume that the rows of~$P$ are perpendicular to each other. Then for any vector~$X \in \R^{2^m}$, the following inequality
    \begin{equation*}
        \norm{PX}{1} \geq 2^m \norm{X}{\infty}
    \end{equation*}
    holds. 
    Here,~$\norm{\,\cdot\,}{1}$ and~$\norm{\,\cdot\,}{\infty}$ are the usual~$1$-norm and~$\infty$-norm defined in~$\R^{2^{m}}$.
\end{lemma}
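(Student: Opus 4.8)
The plan is to exploit the fact that the orthogonality hypothesis, together with the $\pm1$ entries, forces $P$ to be a scalar multiple of an orthogonal matrix whose inverse is again built from $\pm1$ entries; this is precisely the structure that converts an $\infty$-norm statement into an $\ell^1$ one by duality.

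First I would record the algebraic consequence of the hypotheses. Since every entry of $P$ is $\pm1$, each row of $P$ has squared Euclidean length $2^m$, while the assumption that distinct rows are perpendicular says they pair to $0$. Hence $PP^{T}=2^{m}I_{2^m}$, so $P$ is invertible with $P^{-1}=2^{-m}P^{T}$. The crucial observation is that the entries of $P^{T}$ are again all $\pm1$, so $2^{-m}P^{T}$ is an inverse whose entries have absolute value exactly $2^{-m}$.

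Next I would set $Y:=PX$, so that $X=P^{-1}Y=2^{-m}P^{T}Y$. Written coordinatewise this reads $X_i=2^{-m}\sum_j P_{ji}Y_j$, and since $\abs{P_{ji}}=1$ the triangle inequality gives $\abs{X_i}\le 2^{-m}\sum_j\abs{Y_j}=2^{-m}\norm{Y}{1}$ for every index $i$. Taking the maximum over $i$ yields $\norm{X}{\infty}\le 2^{-m}\norm{PX}{1}$, which rearranges to the asserted inequality $\norm{PX}{1}\ge 2^{m}\norm{X}{\infty}$.

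There is essentially no computational difficulty here; the only genuine content is the recognition that perpendicularity plus $\pm1$ entries makes $2^{-m}P^{T}$ a true inverse with uniformly bounded entries, after which the estimate is a one-line $\norm{\cdot}{\infty}$-versus-$\norm{\cdot}{1}$ duality. If I were to flag anything as an obstacle it would only be bookkeeping: I would make sure the orthogonality is used in the form $PP^T=2^mI$ (rather than $P^TP=2^mI$, which would correspond to orthogonal columns) so that the inverse is the transpose as claimed, and I would check the degenerate case $m=0$, where $P$ is a $1\times1$ matrix with entry $\pm1$ and the inequality reduces to $\abs{PX}\ge\abs{X}$ with equality. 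I note in passing that the power-of-two size and perpendicularity are reminiscent of Hadamard matrices, but the argument never invokes the existence of such matrices—it uses only the given $P$.
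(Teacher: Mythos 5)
Your proof is correct and is essentially identical to the paper's: both derive $PP^T=2^mI$ from the $\pm1$ entries and row orthogonality, express $X$ through $P^T$ applied to an auxiliary vector, and use the triangle inequality to bound $\norm{X}{\infty}$ by a $1$-norm. The only cosmetic difference is the normalization of the auxiliary vector (you set $Y=PX$, the paper writes $X=P^TY$, so your $Y$ is $2^m$ times theirs), which changes nothing in substance.
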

\begin{proof}
    By the assumptions,~$PP^T=2^mI$. In particular,~$P^T$ is invertible. Let~$X=P^TY$.
    Note that
    \begin{equation} \label{peq}
        \norm{PX}{1}=\norm{PP^TY}{1}=\norm{2^mY}{1}=2^m\norm{Y}{1}.
    \end{equation}
    For~$1 \leq i \leq 2^m$, let the~$i$-th component of~$X, Y$ be~$x_i,y_i$ respectively. 
    For~$1 \leq i,j \leq 2^m$, let~$P_{i,j}$ be the~$i$-th row,~$j$-th column entry of~$P$. 
    Then from~$X=P^TY$, 
    \begin{equation*}
        x_i=\sum\limits_{j=1}^{2^m} P_{j,i} y_j
    \end{equation*}
    for any~$1 \leq i \leq 2^m$ and thus
    \begin{equation*}
        \abs{x_i}=\ABS{\sum\limits_{j=1}^{2^m} P_{j,i} y_j} \leq \sum\limits_{j=1}^{2^m} \abs{P_{j,i}}\abs{y_j}=\sum\limits_{j=1}^{2^m} \abs{y_j}=\norm{Y}{1}.
    \end{equation*}
    It follows that
    \begin{equation*}
        \norm{X}{\infty} =\max_{1\leq i \leq m} \{\abs{x_i}\}\leq \norm{Y}{1}
    \end{equation*}
    and with \eqref{peq}, we are done.
\end{proof}

Fix~$z \in A$. 
Let~$X_z:=\{x \in \Zn\  | \ x<z+x \}$ and~$D_z:=\{d \in A \ | \ \inn{d,z}=1 \}$. 
Then~$\abs{X_z}=2^{n-1}$ and by (2) of Lemma \ref{subg}, we also have ~$\abs{D_z}=2^{n-1}$. Let~$X_z=\{x_1,x_2,...,x_{2^{n-1}}\}$ and~$D_z=\{d_1,d_2,...,d_{2^{n-1}}\}$. 
The order of the elements is irrelevant in the subsequent arguments.
Let~$V$ be a vector in~$\R^{2^{n-1}}$, whose~$i$-th component is
\begin{equation} \label{viv}
    v_i:=\sum_{\substack{b \in A \\ b<b+d_i}}2n_{b,d_i+b}\log(u_b)\log(u_{d_i+b})(-1)^{\inn{b,z}}
\end{equation}
for~$1\leq i \leq 2^{n-1}$.
Let~$P_z$ be a~$2^{n-1}\times2^{n-1}$ matrix whose entry of~$i$-th row,~$j$-th column is~$(-1)^{\inn{x_i,d_j}}$ for~$1\leq i,j \leq 2^{n-1}$. 
Then each entry of~$P_z$ is either~$1$ or~$-1$. 
Now we show that the rows of~$P_z$ are perpendicular. 
Let~$1\leq i<j\leq 2^{n-1}$.
Then the inner product in~$\R^{2^{n-1}}$ of~$i$-th row and~$j$-th row of~$P_z$ is
\begin{equation*}
    \sum\limits_{k=1}^{2^{n-1}}(-1)^{\inn{x_i,d_k}} \cdot (-1)^{\inn{x_j,d_k}}
    =\sum\limits_{k=1}^{2^{n-1}}(-1)^{\inn{x_i+x_j,d_k}} 
    =\sum\limits_{d \in D_z}(-1)^{\inn{x_i+x_j,d}}.
\end{equation*}
Take any element~$d'$ of~$D_z$. Then~$D_z=d'+z^\perp$, and thus
\begin{equation*}
    \sum\limits_{d \in D_z}(-1)^{\inn{x_i+x_j,d}}=\sum\limits_{d \in z^\perp}(-1)^{\inn{x_i+x_j,d+d'}}=(-1)^{\inn{x_i+x_j,d'}}\sum\limits_{d \in z^\perp}(-1)^{\inn{x_i+x_j,d}}.
\end{equation*}
Since~$x_i$ and~$x_j$ are elements of~$X_z$, we have~$x_i<x_i+z$ and~$x_j<x_j+z$.
Therefore~$x_i+x_j \neq z$; otherwise, substituting $z=x_i+x_j$ to the two order relations would lead to $x_i<x_j$ and $x_j < x_i$, a contradiction.
Also,~$x_i+x_j\neq 0$ since they are distinct. 
By Lemma \ref{subg} (3),
\begin{align*}
    \sum\limits_{d \in z^\perp}(-1)^{\inn{x_i+x_j,d}}
    &=\ABS{\left\{d\in z^\perp \middle| \inn{x_i+x_j,d}=0 \right\}}-\ABS{\left\{d\in z^\perp \middle| \inn{x_i+x_j,d}=1 \right\}} \\
    &=\ABS{z^\perp \cap (x_i+x_j)^\perp}-(\ABS{z^\perp}-\ABS{z^\perp \cap (x_i+x_j)^\perp})\\
    &=2\ABS{z^\perp \cap (x_i+x_j)^\perp}-\ABS{z^\perp}\\
    &=2\cdot 2^{n-2}-2^{n-1}\\
    &=0.
\end{align*}
Therefore the~$i$-th row and~$j$-th row of~$P_z$ are perpendicular.
Now by Lemma \ref{lin}, 
\begin{equation*}
    \sum\limits_{i=1}^{2^{n-1}}\ABS{\sum\limits_{j=1}^{2^{n-1}}v_j(-1)^{\inn{x_i,d_j}}}=\norm{P_zV}{1}\geq 2^{n-1} \norm{V}{\infty}=2^{n-1}\max\limits_{1\leq j\leq n}\abs{v_j}.
\end{equation*}
Rewriting the above inequality without using the indices $i$ and $j$, and substituting $v_j$ with ~\eqref{viv}, we have
\begin{align*}
  & \sum_{\substack{x \in \Zn \\ x<z+x}} \Bigg| \sum_{\substack{d \in A \\ \inn{d,z}=1}}\sum_{\substack{b \in A \\ b<b+d}}2n_{b,d+b}\log(u_b)\log(u_{d+b})(-1)^{\inn{b,z}}(-1)^{\inn{d,x}}\Bigg|\\
  \geq {} & 2^{n-1}\max_{\substack{d \in A \\ \inn{d,z}=1}}\left\{\ABS{\sum_{\substack{b \in A \\ b<b+d}}2n_{b,d+b}\log(u_b)\log(u_{d+b})(-1)^{\inn{b,z}}} \right\}.
\end{align*}
Applying the above inequality to each summand of \eqref{wnorm}, we get
\begin{align*} 
    \norm{w}{1}
    \geq 2^n \sum\limits_{z \in A}\max_{\substack{d \in A \\ \inn{d,z}=1}}\left\{\ABS{\sum_{\substack{b \in A \\ b<b+d}}n_{b,d+b}\log(u_b)\log(u_{d+b})(-1)^{\inn{b,z}}} \right\}
\end{align*}
and for any ~$d'\in A$, 
\begin{align*} 
    \norm{w}{1}
    &\geq 2^n  \sum_{\substack{z \in A \\ \inn{d',z}=1}}\max_{\substack{d \in A \\ \inn{d,z}=1}}\left\{\ABS{\sum_{\substack{b \in A \\ b<b+d}}n_{b,d+b}\log(u_b)\log(u_{d+b})(-1)^{\inn{b,z}}} \right\} \\
    &\geq 2^n  \sum_{\substack{z \in A \\ \inn{d',z}=1}}\ABS{\sum_{\substack{b \in A \\ b<b+d'}}n_{b,d'+b}\log(u_b)\log(u_{d'+b})(-1)^{\inn{b,z}}}
\end{align*}
holds. Therefore we have
\begin{align} \label{fstlm}
    \norm{w}{1}
    \geq 2^n \max_{\substack{d \in A }}\left\{ \sum_{\substack{z \in A \\ \inn{d,z}=1}}\ABS{\sum_{\substack{b \in A \\ b<b+d}}n_{b,d+b}\log(u_b)\log(u_{d+b})(-1)^{\inn{b,z}}}\right\}.
\end{align}

This time, fix~$d\in A$.
The following arguments are similar to above. 
Let~$B_d:=\{b \in \Zn\  | \ b<d+b \}$ and~$Z_d:=\{z \in A \ | \ \inn{d,z}=1 \}$. Then~$\abs{B_d}=\abs{Z_d}=2^{n-1}$. 
Let~$B_d=\{b_1,b_2,...,b_{2^{n-1}}\}$ with~$b_1=0$ and let~$Z_d=\{z_1,...,z_{2^{n-1}}\}$.
Let~$V'$ be a vector in~$\R^{2^{n-1}}$, whose~$1$st element is~$v_1'=0$ and~$i$-th component is
\begin{equation*} 
    v_i':=n_{b_i,d+b_i}\log(u_b)\log(u_{d+b_i})
\end{equation*}
for ~$1 < i \leq 2^{n-1}$.
Let~$Q_d$ be a~$2^{n-1}\times2^{n-1}$ matrix whose entry of~$i$-th row,~$j$-th column is~$(-1)^{\inn{z_i,b_j}}$ for~$1\leq i,j \leq 2^{n-1}$. 
Then each entry of~$Q_d$ is either~$1$ or~$-1$. 
We now show that the rows of~$Q_d$ are perpendicular. 
The previous argument on the rows of~$P_z$ shows that in fact the columns of~$Q_d$ are perpendicular.
Hence~$Q_d^TQ_d=2^{n-1}I=Q_dQ_d^T$, and the rows of~$Q_d$ are also perpendicular.
Then by Lemma \ref{lin},
\begin{equation} \label{qeq}
    \norm{Q_dV'}{1} \geq 2^{n-1} \norm{V'}{\infty}.
\end{equation}
The left hand side of \eqref{qeq} is
\begin{equation*} 
    \sum\limits_{i=1}^{2^{n-1}}\ABS{\sum\limits_{j=1}^{2^{n-1}}v_j'(-1)^{\inn{b_j,z_i}}}
    =\sum_{\substack{z \in A \\ \inn{d,z}=1}}\ABS{\sum_{\substack{b \in A \\ b<b+d}}n_{b,d+b}\log(u_b)\log(u_{d+b})(-1)^{\inn{b,z}}}
\end{equation*}
while the right hand side of \eqref{qeq} is
\begin{equation*} 
    2^{n-1}\max\{0,\abs{v_2'},...,\abs{v_{2^{n-1}}'}\}=
    2^{n-1}\max_{\substack{b \in A \\ b<b+d}}\{ \abs{n_{b,d+b}\log(u_b)\log(u_{d+b})}\}.
\end{equation*}
Applying \eqref{qeq} to the right hand side of \eqref{fstlm}, we get
\begin{equation*}
    \norm{w}{1} \geq 2^{2n-1}\max_{\substack{d \in A}}\left\{\max_{\substack{b \in A \\ b<b+d}}\{ \abs{n_{b,d+b}\log(u_b)\log(u_{d+b})}\}\right\}.
\end{equation*}
By Lemma \ref{quadunit} below, if~$b, c\in A$ and~$b<c$,~$\log(u_b)\log(u_c)\geq \log(\frac{1+\sqrt{5}}{2})\log(1+\sqrt{2})$. 
If there exists~$b,c \in A$ such that~$b<c$ and~$n_{b,c}\neq 0$, we have 
\begin{equation} \label{proto}
    \norm{w}{1} \geq 2^{2n-1}\log\left(\frac{1+\sqrt{5}}{2}\right)\log(1+\sqrt{2}).
\end{equation}

\begin{lemma} \label{quadunit}
    For a square-free integer~$m$, let~$v_m>1$ be the fundamental unit of~$\Q(\sqrt{m})$. 
    Then,~$v_m \geq \frac{1+\sqrt{5}}{2}$ and the equality holds if and only if~$m=5$. 
    Furthermore, if~$m \neq 5$,~$v_m \geq 1+\sqrt{2}$ and the equality holds if and only if~$m=2$. 
\end{lemma}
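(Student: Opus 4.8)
The plan is to bypass the ring-of-integers description of $v_m$ in favour of its trace and norm, which turns the statement into a monotonicity comparison between two explicit one-parameter families. We may assume $m\ge 2$, since $\Q(\sqrt m)$ is real. Write $v=v_m$, let $\overline v$ be its Galois conjugate, and set $x:=\operatorname{Tr}(v)=v+\overline v$ and $\varepsilon:=N(v)=v\overline v\in\{-1,+1\}$; both are integers. Since $v>1$ while $\abs{\overline v}=1/v<1$, the unit $v$ is the larger root of $t^2-xt+\varepsilon$, so $v=\frac{x+\sqrt{x^2-4\varepsilon}}{2}$, and writing $v-\overline v=y\sqrt m$ with $y\in\Z_{\ge 1}$ gives the key identity $x^2-4\varepsilon=my^2$.

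First I would fix the sign and size of $x$. If $\varepsilon=-1$ then $\overline v=-1/v\in(-1,0)$, so $x=v-1/v$ is a positive integer and $x\ge 1$; if $\varepsilon=+1$ then $\overline v=1/v\in(0,1)$, so $x=v+1/v>2$ is an integer, forcing $x\ge 3$ (which is exactly the range in which $x^2-4>0$). Consequently every fundamental unit occurs as a value of one of the two strictly increasing functions
\[ g(x)=\tfrac{x+\sqrt{x^2+4}}{2}\ (x\ge 1,\ \varepsilon=-1), \qquad h(x)=\tfrac{x+\sqrt{x^2-4}}{2}\ (x\ge 3,\ \varepsilon=+1). \]
We do not need the converse: not every such value is a fundamental unit, but that is irrelevant for a lower bound.

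The core of the argument is then an elementary comparison of the smallest values these families produce. Since $g$ and $h$ are increasing, it suffices to record $g(1)=\frac{1+\sqrt5}{2}$, $g(2)=1+\sqrt2$, and $h(3)=\frac{3+\sqrt5}{2}>1+\sqrt2$. From the interleaving $g(1)<g(2)<h(3)$ one reads off that the global minimum of all admissible values is $g(1)=\frac{1+\sqrt5}{2}$, attained only there, and that the only admissible value strictly below $g(2)=1+\sqrt2$ is $g(1)$. This yields both assertions simultaneously: $v\ge\frac{1+\sqrt5}{2}$ with equality only if $(x,\varepsilon)=(1,-1)$, and $v\ge 1+\sqrt2$ whenever $v\ne\frac{1+\sqrt5}{2}$, with equality only if $(x,\varepsilon)=(2,-1)$.

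It remains to translate the two equality pairs back into values of $m$ via $my^2=x^2-4\varepsilon$, so that $m$ is the square-free part of $x^2-4\varepsilon$. The pair $(1,-1)$ gives $x^2-4\varepsilon=5$, which is square-free, hence $m=5$; the pair $(2,-1)$ gives $x^2-4\varepsilon=8=2^3$, with square-free part $2$, hence $m=2$. Conversely $v_5=\frac{1+\sqrt5}{2}$ and $v_2=1+\sqrt2$ are immediate, so the equality cases are exactly $m=5$ and $m=2$ as claimed. The only points demanding care are the sign-and-size analysis of $x=\operatorname{Tr}(v)$ and the verification that the interleaving $g(1)<g(2)<h(3)$ leaves no admissible unit in the open interval $\bigl(\frac{1+\sqrt5}{2},\,1+\sqrt2\bigr)$; everything else is direct computation, with square-freeness of $m$ supplying the uniqueness in the equality statements.
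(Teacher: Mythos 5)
Your proof is correct, but it takes a genuinely different route from the paper's. The paper splits into the cases $m\equiv 2,3\pmod 4$ and $m\equiv 1\pmod 4$, invokes the coordinate description of the fundamental unit ($v_m=a+b\sqrt m$ with $a^2-mb^2=\pm1$, resp.\ $v_m=\tfrac{a+b\sqrt m}{2}$ with $a^2-mb^2=\pm4$, $a,b$ minimal positive), and deduces $v_m\ge 1+\sqrt m$, resp.\ $v_m\ge\tfrac{1+\sqrt m}{2}$; since these intermediate bounds grow with $m$, only the small cases $m=2,5,13$ need individual inspection. You instead parametrize an arbitrary unit $v>1$ by its trace $x$ and norm $\varepsilon$, obtaining the two increasing families $g$ (for $\varepsilon=-1$, $x\ge1$) and $h$ (for $\varepsilon=+1$, $x\ge3$), and read everything off from the interleaving $g(1)<g(2)<h(3)$, translating the equality pairs back to $m$ via $my^2=x^2-4\varepsilon$. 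Your route treats all $m$ uniformly (no congruence case split), uses nothing about fundamental units beyond their being units exceeding $1$ --- so your lower bound in fact applies to every unit $>1$ --- and makes transparent why the golden and silver ratios appear: they are the two smallest values of the trace parametrization. What the paper's approach buys in exchange is brevity and a stronger intermediate estimate for large $m$ (of order $\sqrt m$ rather than an absolute constant). One assertion in your write-up deserves a line of justification: that $v-\overline v=y\sqrt m$ with $y\in\Z_{\ge1}$. This follows either from $\mathcal{O}_{\Q(\sqrt m)}\subseteq\tfrac12\left(\Z+\Z\sqrt m\right)$, or, more in the spirit of your argument, from square-freeness alone: $y=(v-\overline v)/\sqrt m$ is rational and $my^2=x^2-4\varepsilon\in\Z$, so writing $y=p/q$ in lowest terms gives $q^2\mid m$ and hence $q=1$. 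With that line added, the argument is complete.
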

\begin{proof}
    First, consider the case ~$m \equiv 2,3\pmod 4$. 
    The fundamental unit~$v_m$ is of the form ~$a+b\sqrt{m}$ where
   ~$a,b$ are the smallest positive integers satisfying~$a^2 - mb^2=\pm 1$.
    If~$m=2$,~$v_2=1+\sqrt{2}$   and if~$m \neq 2$,~$v_m \geq 1+\sqrt{m} > 1+\sqrt{2}$. \par
    Now consider the case~$m \equiv 1 \pmod{4}$.
    Then ~$v_m$ is of the form~$\frac{a+b\sqrt{m}}{2}$ where
   ~$a,b$ are the smallest positive integers satisfying~$a^2 - mb^2=\pm 4$.
    If~$m=5$,~$v_5=\frac{1+\sqrt{5}}{2}$ and if~$m=13$,~$v_{13}=\frac{3+\sqrt{13}}{2}>1+\sqrt{2}$. 
    Otherwise, if ~$m \geq 17$, then ~$v_m \geq \frac{1+\sqrt{m}}{2} \geq \frac{1+\sqrt{17}}{2} > 1+\sqrt{2}$.
\end{proof}

We now turn to the proof of the main theorem.
\begin{proof}[proof of Theorem \ref{multi}]
    Let~$u$ be any element of~$\OLU$. By Lemma \ref{fini},~$2^{n-1}\LOG(u)\in \LOG(E)$.
    Thus if nonzero~$w$ is in~$\bigwedge^2\LOG(\OLU)$, then~$2^{2n-2}w\in \bigwedge^2\LOG(E)$. Thus by \eqref{proto},
    \begin{equation} \label{final}
        \norm{w}{1}=\frac{1}{2^{2n-2}}\norm{2^{2n-2}w}{1}\geq 2\log\left(\frac{1+\sqrt{5}}{2}\right)\log(1+\sqrt{2}) \approx 0.8483.
    \end{equation}
\end{proof}

\begin{remark}
    The lower bound of \eqref{final} may not be optimal. 
    In other words, there might be a greater lower bound for~$\norm{w}{1}$.
    However, for the case ~$L=\Q(\sqrt{2},\sqrt{5})$ and~$w=\pm \LOG(\frac{1+\sqrt{5}}{2}) \wedge \LOG(1+\sqrt{2})$,
    then~$\norm{w}{1}=8\log(\frac{1+\sqrt{5}}{2})\log(1+\sqrt{2}) \approx 3.3930$ and thus the optimal lower bound for \eqref{final} is at most~$3.3930$.
\end{remark}


\begin{thebibliography}{1}

\bibitem{AD1999}
Francesco Amoroso and Sinnou David, \emph{Le probl\`eme de {L}ehmer en dimension sup\'erieure}, J. Reine Angew. Math. \textbf{513} (1999), 145--179. \MR{1713323}

\bibitem{Be1997}
Daniel Bertrand, \emph{Duality on tori and multiplicative dependence relations}, J. Austral. Math. Soc. Ser. A \textbf{62} (1997), no.~2, 198--216. \MR{1433209}

\bibitem{CFS2022}
Ted Chinburg, Eduardo Friedman, and James Sundstrom, \emph{On {B}ertrand's and {R}odriguez {V}illegas' higher-dimensional {L}ehmer conjecture}, Pacific J. Math. \textbf{321} (2022), no.~1, 119--165, With an appendix by Fernando Rodriguez Villegas. \MR{4555153}

\bibitem{CF1991}
Antone Costa and Eduardo Friedman, \emph{Ratios of regulators in totally real extensions of number fields}, J. Number Theory \textbf{37} (1991), no.~3, 288--297. \MR{1096445}

\bibitem{Le1933}
D.~H. Lehmer, \emph{Factorization of certain cyclotomic functions}, Ann. of Math. (2) \textbf{34} (1933), no.~3, 461--479. \MR{1503118}

\bibitem{Zi1981}
Rainer Zimmert, \emph{Ideale kleiner {N}orm in {I}dealklassen und eine {R}egulatorabsch\"atzung}, Invent. Math. \textbf{62} (1981), no.~3, 367--380. \MR{604833}

\end{thebibliography}
\end{document}